\newcommand{\cal}{\mathcal}
\newcommand{\newsection}[1]
{\section{#1}\setcounter{theorem}{0} \setcounter{equation}{0}
\par\noindent}
\newtheorem{theorem}{Theorem}
\newtheorem{lemma}[theorem]{Lemma}
\newtheorem{proposition}[theorem]{Proposition}
\newtheorem{remark}[theorem]{Remark}
\newcommand{\Z}{{\mathbb Z}}
\newcommand{\M}{\cal M}
\newcommand{\R}{{\mathbb R}}
\newcommand{\tv}{{\tilde{v}}}
\newcommand{\rs}{{r^*}}
\newcommand{\tphi}{{\tilde{\phi}}}
\newcommand{\ang}{{\not\negmedspace\nabla}}
\renewcommand{\S}{{\mathbb S}}
\begin{document}

\title
{ Strichartz estimates on Kerr black hole backgrounds }

\author{Mihai Tohaneanu}

\address{Department of Mathematics, Purdue University,
  West Lafayette, IN 47907-2067}

\begin{abstract}
 We study the dispersive properties for the wave equation in the Kerr
 space-time with small angular momentum. The main result of this
 paper is to establish Strichartz estimates for solutions of the aforementioned equation. This follows a
 local energy decay result for the Kerr space-time obtained in the earlier work 
\cite{TT} and uses the techniques and results from \cite{MMTT} by the author and collaborators. As an   application, we then prove global well-posedness and uniqueness for the energy critical semilinear wave equation.

\end{abstract}

\maketitle
  
 \newsection{Introduction}
 
  Understanding the decay properties of solutions to the linear wave equation on Kerr backgrounds is considered a crucial first step in proving the stability of the Kerr solution to the Einstein equations. Until recently even the problem of obtaining uniform bounds for such solutions was completely open, and 
only some partial results (pointwise decay and energy bounds for azimuthal solutions away from the event horizon) were obtained in \cite{FKSY}, \cite{FS}.  Recently in \cite{TT} and independently in  \cite{DR2}, \cite{DR3}, \cite{AB} uniform pointwise bounds as well as local energy decay were established for small angular momentum. The aim of this paper is to prove Strichartz estimates under the same assumption of small angular momentum.

  The Kerr geometry in Boyer-Lindquist coordinates is given by
\[ 
ds^2 = g_{tt}dt^2 + g_{t\phi}dtd\phi + g_{rr}dr^2 + g_{\phi\phi}d\phi^2
 + g_{\theta\theta}d\theta^2 
\]
 where $t \in \R$, $r > 0$, $(\phi,\theta)$ are the spherical coordinates 
on $\S^2$ and 
\[
 g_{tt}=-\frac{\Delta-a^2\sin^2\theta}{\rho^2}, \qquad
 g_{t\phi}=-2a\frac{2Mr\sin^2\theta}{\rho^2}, \qquad
 g_{rr}=\frac{\rho^2}{\Delta}
 \]
\[ g_{\phi\phi}=\frac{(r^2+a^2)^2-a^2\Delta
\sin^2\theta}{\rho^2}\sin^2\theta, \qquad g_{\theta\theta}={\rho^2}
\]
with
\[ 
\Delta=r^2-2Mr+a^2, \qquad \rho^2=r^2+a^2\cos^2\theta. 
\]

 Here $M$ represents the mass of the black hole, and $aM$ its angular momentum.
 
 A straightforward computation gives us the inverse of the metric:
\[ g^{tt}=-\frac{(r^2+a^2)^2-a^2\Delta\sin^2\theta}{\rho^2\Delta},
\qquad g^{t\phi}=-a\frac{2Mr}{\rho^2\Delta}, \qquad
g^{rr}=\frac{\Delta}{\rho^2},
\]
\[ g^{\phi\phi}=\frac{\Delta-a^2\sin^2\theta}{\rho^2\Delta\sin^2\theta}
, \qquad g^{\theta\theta}=\frac{1}{\rho^2}.
\]

The case $a = 0$ corresponds to the Schwarzschild space-time.  We shall
subsequently assume that $a$ is small $a \ll M$, so that the Kerr
metric is a small perturbation of the Schwarzschild metric. We let
$\Box_{\mathbf K} $ denote the d'Alembertian associated to the Kerr metric.

In the above coordinates the Kerr metric has singularities at $r = 0$
on the equator $\theta = \pi/2$ and at the roots of $\Delta$, namely
$r_{\pm}=M\pm\sqrt{M^2-a^2}$. The singularity at $r=r_{+}$ is just a coordinate singularity,
and corresponds to the event horizon.  The singularity at $r = r_-$ is
also a coordinate singularity; for a further discussion of its nature,
which is not relevant for our results, we refer the reader to
\cite{Ch,HE}.  To remove the singularities at $r = r_{\pm}$ we
introduce functions $r^*$, $v_{+}$ and $\phi_{+}$ so that (see
\cite{HE})
\[
 dr^*=(r^2+a^2)\Delta^{-1}dr,
\qquad
 dv_{+}=dt+dr^*, 
\qquad 
 d\phi_{+}=d\phi+a\Delta^{-1}dr.
\]

The metric then becomes
\[
\begin{split}
ds^2= &\
-(1-\frac{2Mr}{\rho^2})dv_{+}^2+2drdv_{+}-4a\rho^{-2}Mr\sin^2\theta
dv_{+}d\phi_{+} -2a\sin^2\theta dr d\phi_{+} +\rho^2 d\theta^2 \\
& \ +\rho^{-2}[(r^2+a^2)^2-\Delta a^2\sin^2\theta]\sin^2\theta
d\phi_{+}^2
\end{split}
\]
which is smooth and nondegenerate across the event horizon up to but not including 
$r = 0$. Just like
in \cite{MMTT} and \cite{TT}, we introduce the function
\[
\tv = v_{+} - \mu(r)
\]
where $\mu$ is a smooth function of $r$. In the $(\tv,r,\phi_{+},
\theta)$ coordinates the metric has the form
\[
\begin{split}
ds^2= &\ (1-\frac{2Mr}{\rho^2}) d\tv^2
+2\left(1-(1-\frac{2Mr}{\rho^2})\mu'(r)\right) d\tv dr \\
 &\ -4a\rho^{-2}Mr\sin^2\theta d\tv d\phi_{+} + \Bigl(2 \mu'(r) -
 (1-\frac{2Mr}{\rho^2}) (\mu'(r))^2\Bigr)  dr^2 \\
 &\ -2a\theta (1+2\rho^{-2}Mr\mu' (r))\sin^2dr d\phi_{+} +\rho^2
 d\theta^2 \\
 &\ +\rho^{-2}[(r^2+a^2)^2-\Delta a^2\sin^2\theta]\sin^2\theta
d\phi_{+}^2.
\end{split}
\]

On the function $\mu$ we impose the following two conditions:

(i) $\mu (r) \geq  \rs$ for $r > 2M$, with equality for $r >
{5M}/2$.

(ii)  The surfaces $\tv = const$ are space-like, i.e.
\[
\mu'(r) > 0, \qquad 2 - (1-\frac{2Mr}{\rho^2}) \mu'(r) > 0.
\]
As long as $a$ is small, we can work with  the same 
function $\mu$ as in the case of the Schwarzschild space-time.

 For convenience we also introduce
\[
\tphi = \zeta(r)\phi_{+}+(1-\zeta(r))\phi
\]
where $\zeta$ is a cutoff function supported near the event horizon
and work in the $(\tv,r,\tphi, \theta)$ coordinates which are
identical to $(t,r,\phi,\theta)$ outside of a small neighborhood of
the event horizon.

Given $r_{-} < r_e <2M$ we consider the wave equation 
\begin{equation}
 \Box_{\mathbf K}  u = f 
 \label{boxsinhom}\end{equation}
in the cylindrical region
\begin{equation}
 \M_{R} =  \{ \tv \geq 0, \ r \geq r_e \} 
\label{mr}\end{equation}
with initial data on the space-like surface
\begin{equation}
 \Sigma_R^- =  \M_{R} \cap \{ \tv = 0 \}
\label{mr-}\end{equation}
The lateral boundary of $\M_R$, 
\begin{equation}
 \Sigma_R^+ =   \M_R \cap \{ r = r_e\} 
\label{mr+}\end{equation}
is also space-like, and can be thought of as the exit surface
for all waves which cross the event horizon. 

We define the initial (incoming) energy on $\Sigma_R^-$ as 
\begin{equation}\label{energy1}
 E[u](\Sigma_R^-) =  \int_{\Sigma_R^-} \left( |\partial_r u|^2 +
|\partial_\tv u|^2    + |\ang u|^2 \right) r^2  dr  d\omega  
\end{equation}
the outgoing energy on $\Sigma_R^+$ as 
\begin{equation}\label{energy2}
 E[u](\Sigma_R^+) = \int_{\Sigma_R^+}
 \left(  |\partial_r u|^2 +   |\partial_\tv u|^2    +
|\ang u|^2 \right) r_e^2   d\tv d\omega
\end{equation}
and the energy on an arbitrary $\tv$ slice as
\begin{equation}\label{energy3}
 E[u](\tv_0) = \int_{ \M_R \cap \{\tv = \tv_0\}}
 \left(
|\partial_r u|^2 +   |\partial_\tv u|^2    + |\ang u|^2
\right) r^2  dr  d\omega
\end{equation}

For solutions to the constant coefficient wave equation on $\R\times \R^3$, the
well-known Strichartz estimates state that
\begin{equation}\label{strich}
  \||D_x|^{-\rho_1} \nabla u\|_{L^{p_1}_tL^{q_1}_x} \lesssim
\|\nabla u(0)\|_{L^2}  + \||D_x|^{\rho_2} f\|_{L^{p_2'}_tL^{q_2'}_x}.
\end{equation}
Here the exponents $(\rho_i,p_i,q_i)$ are subject to the
scaling relation
\begin{equation}
\frac{1}p+\frac{3}q = \frac{3}2 -\rho
\label{scalingpq}\end{equation}
and the dispersion relation
\begin{equation}
\frac{1}p + \frac{1}q \leq \frac12, \qquad 2 < p \leq \infty.
\label{dispersionpq}\end{equation}
All pairs $(\rho,p,q)$ satisfying \eqref{scalingpq} and
\eqref{dispersionpq} are called Strichartz pairs. Those for which the
equality holds in \eqref{dispersionpq} are called sharp Strichartz
pairs.  Such estimates first appeared in the seminal works
\cite{Brenner}, \cite{Strichartz1, Strichartz2} and as stated include
contributions from, e.g., \cite{GV}, \cite{P}, \cite{K}, \cite{LiSo},
and \cite{KT}.

If one allows variable coefficients, such estimates are
well-understood locally-in-time.  For smooth coefficients, this was
first shown in \cite{MSS} and later for $C^2$ coefficients in
\cite{Smith} and \cite{T1,T2,T3}.

Globally-in-time, the problem is more delicate.  Even a small, smooth,
compactly supported perturbation of the flat metric may refocus a
group of rays and produce caustics. Thus, constructing a parametrix
for incoming rays proves to be quite difficult. At the same time, one
needs to contend with the possibility of trapped rays at high
frequencies and with eigenfunctions/resonances at low frequencies.

Our main theorem is the following global in time estimate:
\begin{theorem}
\label{Strichartz.theorem}
If $u$ solves $\Box_{\mathbf K}  u = f$ in $\M_R$ then for all nonsharp
Strichartz pairs $(\rho_1,p_1,q_1)$ and $(\rho_2,p_2,q_2)$ we have
\begin{equation}
 E[u](\Sigma_R^+) + \sup_{\tv} E[u](\tv) +  \left\| \nabla u\right\|^2_{L^{p_1}_\tv
\dot H^{-\rho_1,q_1}_x}
 \lesssim E[u](\Sigma_R^-) +
\left\| f \right\|^2_{L^{p_2'}_\tv \dot H^{\rho_2, q_2'}_x }.
\end{equation}
\end{theorem}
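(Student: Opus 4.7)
The plan is to follow the approach of \cite{MMTT}, where analogous Strichartz estimates were proved on Schwarzschild, combined with the integrated local energy decay for Kerr from \cite{TT}. The latter already controls an $L^2_{\tv,x}$ local energy norm, with a weight that degenerates at the (perturbed) photon sphere to accommodate trapping. My task is to upgrade this estimate to the full Strichartz norm $L^{p_1}_\tv \dot H^{-\rho_1,q_1}_x$. By a $TT^*$ argument it suffices to consider $f=0$ and data on $\Sigma_R^-$; the mapping from data to the inhomogeneous Duhamel term is then handled by duality.

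Next I would split $u = \chi u + (1-\chi)u$ using a smooth cutoff $\chi$ supported in $\{r \le R_0\}$ for some fixed $R_0 \gg M$. Outside the support of $\chi$ the Kerr metric is a small, long-range perturbation of the Minkowski metric in the $(\tv,r,\tphi,\theta)$ coordinates, so Strichartz estimates for $(1-\chi)u$ follow from known results for such perturbations of the flat wave equation on $\R \times \R^3$; the commutator $[\Box_{\mathbf K}, \chi] u$, which is spatially localized in an intermediate annulus, is absorbed by the local energy norm. For the compactly supported piece $\chi u$, I would perform a Littlewood-Paley decomposition and rescale each dyadic piece at frequency $2^k$ to unit frequency on time intervals of length $2^{-k}$. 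On each such rescaled interval I would build an FBI / wave packet parametrix in the spirit of \cite{MMTT}, valid away from the trapped set, and then sum the contributions over consecutive time intervals and across dyadic scales, using the local energy norm as the glue at each junction.

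The main obstacle is the trapped set. In Kerr with $a \ne 0$ the trapping is not localized on the single sphere $r = 3M$ but spreads over a range of radii depending on the angular momentum and Carter constant; this is precisely what made the Kerr local energy decay subtle. However, since \cite{TT} already provides a local energy inequality with the correct degeneracy at trapping, and since for $a \ll M$ the trapped set remains a normally hyperbolic perturbation of the Schwarzschild photon sphere, the parametrix construction of \cite{MMTT} should go through with routine modifications. The nonsharp hypothesis on the Strichartz pairs, i.e.\ the strict inequality in \eqref{dispersionpq}, enters here to absorb the logarithmic-in-frequency loss coming from the trapping weight and to accommodate the Christ-Kiselev step in the $TT^*$ reduction.
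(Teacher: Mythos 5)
Your outline of the global architecture (dyadic/cutoff decomposition, small-perturbation Strichartz estimates near infinity, local-in-time Strichartz estimates in the compact region glued by local energy norms) is indeed the \cite{MT}/\cite{MMTT} machinery that the paper invokes, so that part matches. The genuine gap is in your treatment of the trapped set. You assert that because \cite{TT} ``already provides a local energy inequality with the correct degeneracy at trapping,'' the parametrix construction goes through with routine modifications, and that the nonsharp hypothesis absorbs ``the logarithmic-in-frequency loss coming from the trapping weight.'' But the weight in the \cite{TT} estimate degenerates polynomially in frequency at the trapped set, not logarithmically; a local energy norm with that degeneracy is \emph{not} strong enough to run the local-to-global Strichartz argument near $r=3M$. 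The bulk of the paper consists precisely of upgrading Theorem \ref{KerrLE} to an estimate with only logarithmic losses: one constructs pseudodifferential operators $B_{ps}$, $B_{ps}^{-1}$ from the symbol $\gamma(-\psi(\lambda_a/\tau)\ln((r-r_a(\tau,\Phi))^2+\lambda_a^{-2}\xi^2),\ln\lambda_a)$, proves the improved local energy estimate (Theorem \ref{theorem.3}) by separation of variables and an ODE analysis splitting into elliptic, hyperbolic, and critical $\lambda_a\approx\tau$ regimes, and only then proves the localized Strichartz bound (Proposition \ref{Kerrps}) by conjugating the equation by $B_{ps}^{-1}$, applying local Strichartz estimates to $\psi=B_{ps}^{-1}u$, and using the nonsharpness to absorb the resulting $(\ln\lambda)^{1/2}$ loss via kernel bounds and Hardy--Littlewood--Sobolev. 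None of this is ``routine'': without the logarithmic improvement your final absorption step has nothing logarithmic to absorb.

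A second, related omission: you correctly note that the Kerr trapped set spreads over a range of radii depending on the frequency, but you do not say how to accommodate this in the parametrix or in the weights. The paper's answer is that the radius of trapping $r_a(\tau,\Phi)$ is a function of the Fourier variables, so the improving symbols $b_{ps}$ must depend on $\tau$ and $\Phi$ (unlike their Schwarzschild counterparts $a_{ps}$), and one must verify the symbol calculus, the approximate-inverse property \eqref{almostid}, and the commutator bound \eqref{Kcmt} with this extra dependence. This is the genuinely new technical content relative to \cite{MMTT}, and it is absent from your proposal.
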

Here the Sobolev-type spaces $\dot H^{s,p}$ coincide with the usual
$\dot H^{s,p}$ homogeneous spaces in $\R^3$ expressed in polar
coordinates $(r,\omega)$.

As a corollary of this result one can consider the global solvability
question for the energy critical semilinear wave equation in the
Kerr space. Let 
\[
\Sigma_0 = \{ t = 0 \}
\]
 Note that $\Sigma_0$ is a smooth, spacelike surface.
 
 We now consider the Cauchy problem with initial data on $\Sigma_0$:
\begin{equation}
\left\{ 
\begin{array}{lc} 
\Box_{\mathbf K}  u = \pm u^5  & \text{in } \M 
\cr \cr
u = u_0, \ \tilde T u = u_1 & \text{in } \Sigma_0.
\end{array}
\right.
\label{nonlin}\end{equation}
 Here $\tilde T$ is a vector field that is smooth, everywhere timelike and equals $\partial_\tv$ on
$\Sigma_0$ outside $\M_C$. Observe that we cannot use $\partial_\tv$ on all of 
$\Sigma_0$ since it becomes spacelike inside the ergosphere (i.e. when $g_{tt}>0$).

\begin{theorem}\label{tnlw}
Let $r_e > r_{-}$. Then there exists $\epsilon > 0$ so 
that for each initial data $(u_0,u_1)$ which satisfies
\[
E[u](\Sigma_0) \leq \epsilon
\]
the equation \eqref{nonlin} admits an unique solution
$u$ in the region $\{ r > r_e\}$ which satisfies the bound
\[
E[u](\Sigma_{r_e}) + \| u \|_{\dot H^{s,p}(\{r > r_e\})}  \lesssim
E[u](\Sigma_0)
\]
for all indices $s,p$ satisfying
\[
\frac{4}p = s+\frac12, \qquad 0 \leq s < \frac12.
\]
Furthermore, the solution has a Lipschitz dependence on the initial
data in the above topology. 
\end{theorem}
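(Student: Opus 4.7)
The plan is to run a Picard iteration in a Strichartz-type space built from Theorem \ref{Strichartz.theorem}, estimating the $u^5$ nonlinearity by combining Sobolev embedding with H\"older's inequality, in parallel with the small-data theory for the energy-critical wave equation on Minkowski space.

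As a preliminary step I would reduce the Cauchy problem with data on $\Sigma_0$ to one with data on $\Sigma_R^-$. The two surfaces coincide outside a small neighborhood of the event horizon (where $\mu(r)=r^*$) and both are spacelike, so finite speed of propagation combined with a short-time well-posedness argument in the compact transition region transfers the data while preserving the energy up to a constant, and the regions $\{r > r_e\}$ and $\M_R$ can be identified after the iteration.

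Next, fix exponents $0 < s_0 < 1/2$ with $4/p_0 = s_0 + 1/2$, and define $q_0$ by $1/q_0 = 1/p_0 - s_0/3$, so that $\dot H^{s_0,p_0}_x \hookrightarrow L^{q_0}_x$. Let $X = L^{p_0}_\tv \dot H^{s_0,p_0}_x(\{r>r_e\})$; this norm arises on the left-hand side of Theorem \ref{Strichartz.theorem} for a suitable nonsharp pair $(\rho_1,p_1,q_1)$. H\"older in spacetime then gives $\|u^5\|_{L^{p_0/5}_\tv L^{q_0/5}_x} \lesssim \|u\|_X^5$, and for the energy-critical scaling the left-hand norm sits in the dual space $Y$ of an admissible nonsharp pair on the forcing side of Theorem \ref{Strichartz.theorem}; the Minkowski analogue is $(p,q)=(5,10)$, which is strictly nonsharp and which the stated range $0\le s<1/2$ is designed to include. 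Define $\Phi(u) = u_L \mp \Box_{\mathbf K}^{-1}(u^5)$, where $u_L$ is the linear solution with the transferred data and $\Box_{\mathbf K}^{-1}$ denotes the forward solution operator with vanishing data on $\Sigma_R^-$. Theorem \ref{Strichartz.theorem} yields $\|u_L\|_X \lesssim E[u](\Sigma_0)^{1/2}$ and $\|\Box_{\mathbf K}^{-1}(u^5)\|_X \lesssim \|u\|_X^5$, so for $\epsilon$ sufficiently small, $\Phi$ is a contraction on the ball $\{\|u\|_X \leq 2C\epsilon^{1/2}\}$. Its unique fixed point is the desired solution; Lipschitz dependence follows by applying the same estimates to $\Phi(u) - \Phi(v)$, and the energy bound $E[u](\Sigma_{r_e}) \lesssim E[u](\Sigma_0)$ is read off from the $E[u](\Sigma_R^+)$ term of Theorem \ref{Strichartz.theorem}.

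The main obstacle I expect is organizing the exponent bookkeeping so that the nonlinear estimate closes while every Strichartz pair used remains strictly inside the nonsharp admissible range of Theorem \ref{Strichartz.theorem}: the natural energy-critical choice is nonsharp in Minkowski, but one must verify that both $X$ on the left and the dual forcing norm $Y$ arise from \emph{nonsharp} pairs and that the Sobolev/H\"older losses balance at the critical $\dot H^1$ scaling. Once a suitable $(s_0, p_0, q_0)$ is identified, the rest is a routine transcription of the Minkowski small-data argument; the geometric reduction to $\Sigma_R^-$ near the event horizon is straightforward given the coordinates introduced above, provided it is carried out carefully enough that no energy is lost across the transition.
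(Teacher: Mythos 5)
Your strategy is exactly the one the paper intends: the paper's entire proof of Theorem \ref{tnlw} is the single sentence deferring to Section 5 of \cite{MMTT}, and that section carries out precisely the reduction of data from $\Sigma_0$ to $\Sigma_R^-$ followed by a small-data contraction argument in a Strichartz space built from the global estimate of Theorem \ref{Strichartz.theorem}, with Lipschitz dependence read off from the difference estimate. So in outline you have reproduced the referenced argument.

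One point of bookkeeping does not close as you wrote it, and it is the very point you flagged as the main obstacle. If you take $X=L^{p_0}_\tv\dot H^{s_0,p_0}_x$, embed into $L^{p_0}_\tv L^{q_0}_x$ with $1/q_0=1/p_0-s_0/3$, and apply H\"older symmetrically to $u^5$, then the scaling relation \eqref{scalingpq} together with the requirement $p_0/5\geq 1$ forces $p_0=5$, $q_0=10$, and the dual forcing norm to be $L^1_\tv L^2_x$, i.e.\ the \emph{sharp} pair $(\rho_2,p_2,q_2)=(0,\infty,2)$, which is excluded by the hypotheses of Theorem \ref{Strichartz.theorem} as stated. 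This is repaired in either of two standard ways: estimate the nonlinearity asymmetrically, placing four factors in an $L^{p}_\tv L^{q}_x$ norm and one in a norm carrying the derivative so that the resulting dual pair is strictly nonsharp (this is what \cite{MMTT} does, and is why the conclusion is stated for the full range $0\le s<1/2$ rather than a single exponent); or observe that the inhomogeneous estimate with $L^1_\tv L^2_x$ forcing follows from the homogeneous (zero forcing) case of Theorem \ref{Strichartz.theorem} by Duhamel's formula and Minkowski's integral inequality, so the sharp dual endpoint is available anyway. With either fix your contraction argument goes through as described.
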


Theorem \ref{tnlw} follows from Theorem \ref{Strichartz.theorem} exactly like in Section 5 of \cite{MMTT}, hence we will now focus on proving the latter.

{\bf Acknowledgements:} The author would like to thank Daniel Tataru for many useful conversations and suggestions regarding the paper.

\newsection{The Strichartz estimates}

 In this section we will prove Theorem \ref{Strichartz.theorem}. The key to the proof is the approach 
developed in \cite{MT} for small perturbations of the Minkowski metric and \cite{MMTT} for the Schwarzschild metric which allows one to establish global-in-time Strichartz estimates provided that a strong form of the local energy estimates holds. 

 A weaker local energy result has been proved in \cite{TT}. We first improve this to a stronger result that only requires logarithmic losses in the energy norm. We then apply the techniques from \cite{MT} and \cite{MMTT} to obtain Strichartz estimates for all nonsharp exponents.

  Let us first recall the setup and results from \cite{TT}. Let $\tau, \xi, \Phi$ and $\Theta$ be
the Fourier variables corresponding to $t, r, \phi$ and $\theta$, and
\[
p( r, \phi,\tau, \xi, \Phi,\Theta)
=g^{tt}\tau^2+2g^{t\phi}\tau\Phi+g^{\phi\phi}\Phi^2
+g^{rr}\xi^2 +g^{\theta\theta}\Theta^2
\]
 be the principal symbol of $\Box_{\mathbf K} $. On any null geodesic one has
\begin{equation}\label{Ham}
 p(t, r, \phi,\theta,\tau, \xi, \Phi,\Theta)=0.
\end{equation}

 Moreover, all trapped null geodesics in the exterior $r>r_+$ must also satisfy (see \cite{TT} for more details):
 \begin{equation}\label{xi} 
 \xi= 0
 \end{equation}
 \begin{equation}\label{rct} 
  r= \text{constant} \;
 \text{ so that } \; (2r\Delta - (r-M)\rho^2)^2\leq 4a^2 r^2\Delta\sin^2 \theta 
 \end{equation}
 \begin{equation}\label{tauphi} 
  R_a(r,\tau,\Phi)= 0 
 \end{equation}  
  where
  \begin{align*}
  R_a(r,\tau,\Phi)=&(r^2+a^2)(r^3-3Mr^2+a^2r+a^2M)\tau^2 \\
  &{}- 2aM(r^2-a^2)\tau\Phi- a^2(r-M)\Phi^2 
  \end{align*}
 
 By \eqref{Ham} we can bound $\Phi$ in terms of $\tau$,
\begin{equation}
|\Phi| \leq 4 M |\tau|
\end{equation}
For $\Phi$ in this range and small $a$ the polynomial 
$\tau^{-2} R_a(r,\tau,\Phi)$ can be viewed as a small perturbation of 
\[
\tau^{-2} R_0(r,\tau,\Phi) = r^4(r-3M) 
\]
which has a simple root at $r = 3M$. Hence for small $a$ the
polynomial $R_a$ has a simple root close to $3M$, which we denote by
$r_a(\tau,\Phi)$.

   In \cite{TT} the following result is proved:
\begin{theorem}\label{KerrLE}
 Let $u$ solve $\Box_{\mathbf K}  u = f$ in $\M_R$. Then
\begin{equation}
 \|u \|_{LEW_{\mathbf K}}^2 + \sup_{\tilde v} E[u](\tilde v) + E[u](\Sigma_R^+)
 \lesssim E[u](\Sigma_R^-)+ \|f \|_{LEW_{\mathbf K}^{*}}^2.
\end{equation}
\end{theorem}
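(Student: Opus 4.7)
The plan is to combine the classical multiplier method of Morawetz type with a phase-space analysis that captures the precise location of the trapped null geodesics described by \eqref{xi}--\eqref{tauphi}. First I would exploit the stationarity and axial symmetry of the Kerr metric by Fourier transforming in $t$ and $\phi$, reducing the problem to a family of equations in $(r,\theta)$ parametrized by the dual variables $(\tau,\Phi)$. For fixed $(\tau,\Phi)$ with $|\Phi| \le 4M|\tau|$ the trapping condition reduces to $R_a(r,\tau,\Phi)=0$, which has a simple root $r_a(\tau,\Phi)$ close to $3M$. The goal of the first step is thus to prove the estimate mode-by-mode in $(\tau,\Phi)$ and then reassemble via Plancherel.

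Second, for each frequency I would construct a radial multiplier of the form $F(r,\tau,\Phi)\partial_r + G(r,\tau,\Phi)$ with $F$ vanishing to first order and changing sign at $r=r_a(\tau,\Phi)$. Because $R_a$ has a simple zero at $r_a$, a standard integration by parts shows that the commutator $[\Box_{\mathbf K},F\partial_r+G]$ yields a bulk quadratic form that is nonnegative in the principal symbol, with a quadratic vanishing only at the trapping radius; this precisely accounts for the logarithmic loss built into the $LEW_{\mathbf K}$ norm. Because $r_a(\tau,\Phi)$ is smooth and stays close to $3M$ on the relevant range, the pointwise-in-frequency multipliers assemble into a single pseudodifferential multiplier. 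The smallness hypothesis $a \ll M$ enters exactly here: it guarantees that the construction is a controlled perturbation of the Schwarzschild multiplier used in \cite{MMTT}, and that the off-diagonal errors in $\theta$ coming from the $a^2\sin^2\theta$ terms in $p$ are small enough to be absorbed.

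Third, I would handle the energy identities. Since $\partial_\tv$ is timelike on the $\tv$-slices throughout $\M_R$ by condition (ii) on $\mu$, the divergence theorem applied to the stress-energy tensor contracted with $\partial_\tv$ yields coercive boundary integrals on $\Sigma_R^-$, $\Sigma_R^+$ and on each $\tv$-slice. The bulk term, however, is not of definite sign inside the ergoregion, so the negative part must be absorbed into the local energy norm produced in the previous step; this is why a nontrivial local energy estimate is required for Kerr even to conclude boundedness of energy, in contrast to Schwarzschild. Near the event horizon I would add a red-shift multiplier in the spirit of \cite{DR2} to produce the coercive control of $E[u](\Sigma_R^+)$, while at large $r$ a standard Morawetz multiplier $f(r)\partial_r$ with $f\to 1$ handles the asymptotic region.

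The main obstacle will be the second step: producing a unified frequency-dependent multiplier that simultaneously vanishes at the correct trapping locus for every $(\tau,\Phi)$, produces a nonnegative commutator modulo small errors, and degenerates only quadratically so that the logarithmic loss suffices. In contrast to Schwarzschild, where the trapped set projects to the single radius $r=3M$ and a scalar radial multiplier works, in Kerr the trapping moves with frequency and one must replace the scalar multiplier by a genuine pseudodifferential operator, then control commutator errors via a G{\aa}rding-type argument in combination with the degenerate $LEW_{\mathbf K}$ norm.
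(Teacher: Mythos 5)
The first thing to note is that the paper does not prove Theorem \ref{KerrLE} at all: it is quoted verbatim from \cite{TT} (``In \cite{TT} the following result is proved''), so there is no in-paper argument to compare against. Your outline is nevertheless a fair reconstruction of the strategy actually used in \cite{TT}: treat Kerr as a small perturbation of Schwarzschild away from the photon sphere, and near $r=3M$ replace the scalar Schwarzschild multiplier by a frequency-dependent one whose radial factor vanishes at the frequency-dependent trapping radius $r_a(\tau,\Phi)$, assembled into a pseudodifferential multiplier with positivity recovered by a G\aa rding-type argument; energy boundedness is not free in Kerr because $\partial_t$ fails to be timelike in the ergoregion, so it must be coupled to the local energy estimate, with a red-shift multiplier at the horizon and a standard Morawetz multiplier at infinity. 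All of this is the right architecture.

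Two points need attention. First, there is a circularity in your step one: Fourier transforming in $t$ presupposes that $u$ is tempered in $t$, which is exactly the kind of global-in-time control the theorem is meant to produce. The paper itself is careful about this -- it invokes Theorem \ref{KerrLE} precisely in order to \emph{justify} taking the Fourier transform in $t$ when proving the later, improved estimate. To prove Theorem \ref{KerrLE} itself you must either work with spacetime pseudodifferential multipliers applied directly to the equation (as in \cite{TT}), or truncate to finite time intervals and obtain constants uniform in the length of the interval. Second, you misattribute the nature of the degeneracy: the norm $LEW_{\mathbf K}$ in Theorem \ref{KerrLE} degenerates \emph{polynomially} on the trapped set; it carries no logarithmic loss. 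The statement with only logarithmic losses is Theorem \ref{theorem.3} of this paper, and obtaining it is the paper's main new analytic work -- it requires the $\gamma$-symbols $b_{ps}$, $b_{ps}^{-1}$, the mode-by-mode ODE analysis of Proposition \ref{ODE}, and the case analysis in $(\tau,\Phi,\lambda_a)$. Your claim that the quadratic vanishing of the commutator ``precisely accounts for the logarithmic loss built into the $LEW_{\mathbf K}$ norm'' conflates the two results; the quadratic degeneracy of the commutator only yields the weaker, polynomially degenerate norm of Theorem \ref{KerrLE}.
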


The exact definition of the norms $LEW_{\mathbf K}$ is not important; what matters is that it is equivalent to $H^1_{t,x}$ on compact sets outside an $O(1)$ neighborhood of $r=3M$ and is degenerate on the trapped set (described above). Near infinity (say for $r>4M$) the norm $LEW_{\mathbf K}$ is defined on dyadic regions as follows:
\begin{equation}\label{LEinfty}
 \|u\|_{LEW_{\mathbf K}} = \sup_{j \in \Z}
2^{-\frac{j}2}\|\nabla_{t, x} u\|_{L^2 (\R \times \{|x|\in [2^{j-1}, 2^j]\})} + 2^{-\frac{3j}2}\|u\|_{L^2 (\R \times \{|x|\in [2^{j-1}, 2^j]\})}
\end{equation}

 The norm $LEW_{\mathbf K}^{*}$ is the dual of $LEW_{\mathbf K}$; in particular, near infinity it will be defined as  
 \begin{equation}
 \|f\|_{LEW_{\mathbf K}^*}= \sum_{j \in \Z}
2^{\frac{j}2}\|f\|_{L^2 (\R \times \{|x|\in [2^{j-1}, 2^j]\})}
\label{le*m}\end{equation}

 We first improve Theorem \ref{KerrLE} around $r=3M$. We will work with the usual Boyer-Lindquist coordinates $(t, r, \phi, \theta)$ since we are only interested in improving the estimate around $r=3M$. In these coordinates we can write
\begin{equation}
\rho^2 \sqrt{\Delta} \Box_{\mathbf K}  \frac{1}{\sqrt{\Delta}} = L_{K} =
 \Delta \partial_r^2 + (-\frac{(r^2+a^2)^2}{\Delta}\partial_t^2 -a\frac{2Mr}{\Delta}\partial_t
 \partial_{\phi} -\frac{a^2}{\Delta}\partial_{\phi}^2 + L_a)  + V(r), 
\label{KerrRW}\end{equation}  
where 
\[
 L_a = a^2 \sin^2 \theta \partial_t^2 + \frac{1}{\sin^2 \theta}\partial_{\phi}^2 + \frac{1}{\sin \theta}\partial_{\theta} (\sin \theta \partial_{\theta})
\]
and
\[
 V(r) =  \sqrt{\Delta}\partial_{r}(\Delta\partial_{r}\frac{1}{\sqrt{\Delta}})
\]

 We can now take the Fourier transform in $t$ (this is allowed since by Theorem \ref{KerrLE} since $u$ is a tempered distribution in $t$) and $\phi$, and expand $\mathcal{F}_t L_a$ (which is an elliptic operator on $\S^2$) as the countable sum of its eigenvectors and corresponding eigenvalues $\lambda_a^2$ (note that $\lambda_0$ correspond to the usual spherical harmonics). This is possible since the operators $\partial_\phi$ and $L_a$ commute. We are left with the ordinary differential equation
 \begin{equation}\label{psKeq}
  (\Delta \partial_r^2 + V_{\lambda_a, \tau, \Phi}(r))w(r) = g
 \end{equation}
where
\begin{equation}\label{Vdiff}
V_{\lambda_a,\tau, \Phi}(r) = \tau^2 W(r - r_a (\tau, \Phi)) - \lambda_a^2 + V.
\end{equation}
and $W(0)=W'(0)=0$, $W''(0)>0$.

 Let $\gamma_0: \R \to \R^+$ be a smooth increasing function so that
\[
\gamma_0(y) = \left\{ \begin{array}{cc} 1& y < 1 \cr y & y \geq 2.
  \end{array}
\right.
\]

Let $\gamma_1: \R^+ \to \R^+$ be a smooth increasing function so that
\[
\gamma_1(y) = \left\{ \begin{array}{cc} y^\frac12 & y < 1/2 \cr 1 & y \geq 1.
  \end{array}
\right.
\]

Let $\gamma: \R^2 \to \R^+$ be a smooth function with the following
properties:
\begin{equation}\label{fnpdo}
\gamma(y,z) = \left\{ \begin{array}{cc} 1 & z < C \cr 
\gamma_0(y)  &  y< \sqrt{z/2},\ z \geq C \cr
z^\frac12 \gamma_1(y^2/z) & y \geq   \sqrt{z/2}, \   z   \geq C
  \end{array}
\right.
\end{equation}
where $C$ is a large constant. In the sequel $z$ is either a discrete
parameter or very large (see Remark \ref{roleoflambda}), so the lack of smoothness at $z=C$ is of no consequence.
 
 We define the symbol
\[
b_{ps}(r, \tau, \xi, \Phi, \lambda_a) = \gamma(-\psi(\frac{\lambda_a}{\tau})\ln((r-r_a(\tau, \Phi))^2 +\lambda_a^{-2} \xi^2), \ln \lambda_a),
\]
and its inverse 
\[
b_{ps}^{-1}(r, \tau, \xi, \Phi, \lambda_a) = 
\frac{1}{\gamma(-\psi(\frac{\lambda_a}{\tau})\ln((r-r_a(\tau, \Phi))^2 +\lambda_a^{-2} \xi^2), \ln \lambda_a)}.
\]
 Here $\psi:\R\to\R$ is a smooth cutoff such that 
\[
 \psi(y) = \left\{ \begin{array}{cc} 1 & y \in [-4, -2]\cup [2, 4] \cr 0 & y\in (-\infty, 8]\cup [-1, 1] \cup [8, \infty)
 \end{array}
\right. 
\] 
 The role of $\psi$ is to make sure that $b=1$ when $\tau \ll \lambda_a$ and $\tau \gg \lambda_a$.
 
 Observe that, as opposed to their Schwarzschild counterparts $a_{ps}$ and  $a_{ps}^{-1}$ defined in \cite{MMTT}, the symbols $b_{ps}$ and $b_{ps}^{-1}$ will depend on $\tau$ and $\Phi$.

 We note that if $\lambda_a$ is small then $b_{ps}$ and $b_{ps}^{-1}$ both equal $1$, while if
$\lambda_a$ is large then they satisfy the bounds
\begin{equation}
\begin{split}
  1  \leq b_{ps}(r, \tau, \xi, \Phi, \lambda_a) \leq b_{ps}(r, \tau, 0, \Phi, \lambda_a)\leq (\ln
  \lambda_a)^\frac12,
  \\
  (\ln
  \lambda_a)^{-\frac12}\leq  b_{ps}^{-1}(r, \tau, 0, \Phi, \lambda_a) \leq b_{ps}^{-1}(r, \tau, \xi, \Phi, \lambda_a) 
  \leq 1.
\end{split}
\label{bpsbds}\end{equation}

 We also observe that the region where $y^2 > z$ corresponds to $(r-r_a(\tau, \Phi))^2
+\lambda_a^{-2} \xi^2 < e^{-\sqrt{ \ln \lambda_a}}$. Thus differentiating the two symbols we obtain the following bounds
\begin{align}
|\partial_{r}^\alpha \partial_\xi^\beta \partial_{\lambda_a}^\nu \partial_\tau^\eta
b_{ps}(r, \tau, \xi, \Phi, \lambda_a)| \leq c_{\alpha,\beta,\nu} (1+|\ln((r-r_a(\tau, \Phi))^2 +\lambda_a^{-2} \xi^2)|)\lambda_a^{-\beta-\nu-\eta} \\
((r-r_a(\tau, \Phi))^2 +\lambda_a^{-2} \xi^2 + e^{-\sqrt{ \ln \lambda_a}} )^{-\frac{\alpha+\beta+\eta}2},
\label{bpsbd}\end{align}
respectively 
\begin{align}
|\partial_{r}^\alpha \partial_\xi^\beta \partial_{\lambda_a}^\nu \partial_\tau^\eta
b_{ps}^{-1}(r, \tau, \xi, \Phi, \lambda_a)| \leq c_{\alpha,\beta,\nu}
b_{ps}^{-2}(r, \tau, \xi, \Phi, \lambda_a) (1+|\ln((r-r_a(\tau, \Phi))^2 +\lambda_a^{-2} \xi^2)|) \\ \lambda_a^{-\beta-\nu-\eta}((r-r_a(\tau, \Phi))^2 +\lambda_a^{-2}
\xi^2 + e^{-\sqrt{ \ln \lambda_a}} )^{-\frac{\alpha+\beta+\eta}2},
\label{bps1bd}\end{align}
when $\alpha + \beta + \nu + \tau> 0$ and $\ln \lambda \geq C$.
These show that we have a good operator calculus for the corresponding pseudodifferential operators.
In particular in terms of the classical symbol classes we have
\begin{equation}\label{apssymb}
b_{ps}, b_{ps}^{-1} \in S^{\delta}_{1- \delta, 0}, \qquad \delta > 0.
\end{equation}

By \eqref{bpsbd} and \eqref{bps1bd} one easily sees that these operators are approximate inverses. More precisely for small $\lambda_a$, $\ln \lambda_a < C$, they are both the identity, while
for large $\lambda_a$ 
\begin{equation}\label{almostid}
\| b_{ps}^w(\lambda_a) (b_{ps}^{-1})^w(\lambda_a)-I\|_{L^2 \to L^2} \lesssim
\lambda_a^{-1} e^{\sqrt{\ln \lambda_a}}, \qquad \ln \lambda_a \geq C.
\end{equation}
Choosing $C$ large enough we insure that the bound above
is always much smaller than $1$.

\begin{remark}\label{roleoflambda} 
 The role played by $\lambda_a$ changes from the proof of Theorems \ref{theorem.3} to the proof of the Strichartz estimates. Since all of our $L^2$ estimates admit orthogonal decompositions with respect to the eigenfunctions associated to $\lambda_a$, it suffices for Theorems \ref{theorem.3} to fix $\lambda_a$ and work with the operators $b_{ps}^w(\lambda_a)$. However, in the proof of the Strichartz estimates we need kernel bounds for operators of the form $B_{ps}$, which is why we think of $\lambda_a$ as a fourth Fourier variable (besides $\tau$, $\Phi$ and $\xi$) and track the symbol regularity with respect to $\lambda_a$ as well. Of course, this is meaningless for $\lambda_a$ in a compact set; only 
the asymptotic behavior as $\lambda_a \to \infty$ is relevant.
\end{remark} 

We can now introduce the Weyl operators
 \[
B_{ps} = \sum_{\lambda_a} b_{ps}^w(\lambda_a) \Pi_{\lambda_a},
\]
respectively
\[
B_{ps}^{-1} = \sum_{\lambda_a} (b_{ps}^{-1})^w(\lambda_a) \Pi_{\lambda_a}.
\]
 where $\Pi_{\lambda_a}$ are the spectral projectors on the eigenspaces of $L_a$ determined by $\lambda_a$. 
 
We use these two operators in order to define the improved local
energy norms around $r=3M$. Let $I $ be a small neighborhood of $3M$ 
(which does not depend on $a$). We say that $\tilde u \in LEK_{ps}$ if $\tilde u : \R \times I \times \S^2 \to \R$ and 
\begin{equation}
  \|\tilde u\|_{LEK_{ps}} =  \|B^{-1}_{ps}\tilde u\|_{H^1} \approx 
\| A_{ps}^{-1} \nabla\tilde u\|_{L^2} < \infty,
\end{equation}

Similarly $\tilde f \in LEK_{ps}^*$ if $\tilde f :  \R \times I \times \S^2 \to \R$ and 
\begin{equation}
\|\tilde f \|_{LEK_{ps}^*} = \| B_{ps}\tilde f\|_{L^2} < \infty.
\end{equation} 

 Let $\chi(r)$ be a cutoff function supported on $I$ which equals $1$ 
on a smaller neighborhood of size $O(1)$ near $3M$, and $u: \M_R \to \R$. We say that $u \in LE_{\mathbf K}$ if $\chi u$ is the restriction of some $\tilde u \in LEK_{ps}$ on $\M_R$, and
\begin{equation}
  \| u\|_{LE_{\mathbf K}} = \inf_{\tilde u \mid_{\M_R} = \chi u}\|\tilde u\|_{LEK_{ps}} + \|(1-\chi) u\|_{LEW_{\mathbf K}} 
\end{equation}  

 Similarly if $f: \M_R \to \R$, then $f \in LE_{\mathbf K}^*$ if $\chi f$ is the restriction of some $\tilde f \in LEK_{ps}^*$ on $\M_R$, and
\begin{equation}
\| f\|_{LE_{\mathbf K}^*} = \inf_{\tilde f \mid_{\M_R} = \chi u} \|\chi \tilde f\|_{LEK_{ps}^*} + \|(1-\chi) f\|_{LEW_{\mathbf K}^{*}} 
\end{equation}

Our improved local energy estimate reads:
\begin{theorem} \label{theorem.3} For all functions $u$ which solve
  $\Box_{\mathbf K} u = f$ in $\M_R$ we have
\begin{equation*}
\sup_{\tv > 0}  E[u](\tv) +  E[u](\Sigma_R^+) +
\| u \|_{LE_{\mathbf K}}^2  \lesssim E[u](\Sigma_R^-) + \| f \|_{LE_{\mathbf K}^*}^2.
\end{equation*}
\end{theorem}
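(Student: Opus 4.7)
The plan is to follow the scheme used for Schwarzschild in \cite{MMTT}: combine Theorem \ref{KerrLE} away from $r = 3M$ with a sharper microlocal estimate near the trapped set. I first fix a cutoff $\chi_1$ supported where $\chi \equiv 1$ and split $u = \chi_1 u + (1-\chi_1)u$. On the complement of $\mathrm{supp}\,\chi_1$ the norms $LE_{\mathbf K}$ and $LEW_{\mathbf K}$ agree, so Theorem \ref{KerrLE} immediately handles $(1-\chi_1)u$. For $v = \chi_1 u$ we have $\Box_{\mathbf K} v = \chi_1 f + [\Box_{\mathbf K}, \chi_1]u$, and the commutator is supported on $\mathrm{supp}\,\nabla\chi_1$, a compact set bounded away from $3M$; that term too is controlled by Theorem \ref{KerrLE}. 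So the task reduces to estimating $v$ with right-hand side $\chi_1 f$.

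Since $v$ is compactly supported in $r$, I work in Boyer-Lindquist coordinates, use the factorization \eqref{KerrRW}, take the Fourier transform in $t,\phi$, and spectrally decompose in eigenspaces of $L_a$. The problem reduces to proving, for each frequency triple $(\tau,\Phi,\lambda_a)$, the one-dimensional bound
\[
\|b_{ps}^{-1}(r,\tau,D_r,\Phi,\lambda_a)\, w\|_{H^1_r}^2 \lesssim \|b_{ps}(r,\tau,D_r,\Phi,\lambda_a)\, g\|_{L^2_r}^2 + \mathrm{error}
\]
for the ODE \eqref{psKeq}, the error terms being compactly supported perturbations absorbed by Theorem \ref{KerrLE}. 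Summing over $\lambda_a$ by orthogonality of the projectors $\Pi_{\lambda_a}$, integrating in $(\tau,\Phi)$ via Plancherel, and inverting the Fourier transform then reassembles the physical-space estimate for $v$.

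The one-dimensional estimate I would establish by a positive commutator argument. Conjugating $L_{1d} := \Delta\partial_r^2 + V_{\lambda_a,\tau,\Phi}$ by $b_{ps}^{-1}$ and pairing against a multiplier of the form $M = i(\alpha\, \partial_r + \partial_r\, \alpha) + \beta$, with $\alpha$ a symbol that changes sign at $r = r_a(\tau,\Phi)$, yields standard non-trapping coercivity away from the trapped set $(r - r_a)^2 + \lambda_a^{-2}\xi^2 \gtrsim 1$. Near the trapped set the logarithmic weight built into $b_{ps}$ is tuned precisely so that, after conjugation, the positive commutator remains nonnegative with a degenerate but nontrivial weight. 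The symbol calculus \eqref{apssymb} gives a good pseudodifferential framework, while the almost-inverse identity \eqref{almostid} and the $\psi$ cutoff keep the resulting commutator and conjugation errors small enough to be absorbed on the left, provided the constant $C$ in \eqref{fnpdo} is chosen large; for $\ln\lambda_a < C$ the symbols are identically $1$ and the estimate is a classical non-trapping energy bound.

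The main difficulty, compared with the Schwarzschild case, is that the trapping location $r_a(\tau,\Phi)$ depends on the time and azimuthal frequencies, so $b_{ps}$ is a genuine pseudodifferential operator in four variables including $\lambda_a$ (see Remark \ref{roleoflambda}). Keeping track of the commutators between $b_{ps}$, $b_{ps}^{-1}$ and the $r$- and $(\tau,\Phi)$-dependent coefficients of $L_{1d}$ while remaining within the $e^{-\sqrt{\ln\lambda_a}}$ slack afforded by the derivative bounds \eqref{bpsbd}-\eqref{bps1bd} is the technically delicate step, and is precisely why $\gamma$ in \eqref{fnpdo} and the cutoff $\psi$ are engineered as they are.
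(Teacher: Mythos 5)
Your reduction to the region near $r=3M$ is circular as written. To control the far piece $(1-\chi_1)u$ and the commutator term $[\Box_{\mathbf K},\chi_1]u$ you invoke Theorem \ref{KerrLE}, but that theorem requires $f\in LEW_{\mathbf K}^*$ on all of $\M_R$, whereas in Theorem \ref{theorem.3} the forcing near the photon sphere is measured only in the strictly weaker norm $LEK_{ps}^*$ (a logarithmic weight at the trapped frequencies, rather than the much stronger dual weight of the degenerate norm $LEW_{\mathbf K}$). So Theorem \ref{KerrLE} cannot be applied to $u$ itself, and your estimate of the commutator term by $\|u\|_{LEW_{\mathbf K}}$ presupposes what is to be proved. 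This is exactly what part (b) of Proposition \ref{Kerrppsbprop} is for: given $\chi f\in LEK_{ps}^*$ supported near $3M$, one constructs a parametrix $w$ with $\sup_t E[w]+\|w\|_{LEK_{ps}}^2$ controlled and with error $\Box_{\mathbf K}w-\chi f$ lying in the good dual space; Theorem \ref{KerrLE} is then applied to $u-w$, whose forcing genuinely belongs to $LEW_{\mathbf K}^*$. Your proposal omits this parametrix step entirely, and without it the decomposition does not close.

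On the microlocal estimate itself, your single positive-commutator argument glosses over the fact that the one-dimensional problem \eqref{psKeq} changes type across frequency regimes: for $\lambda_a\gg\tau$ the operator is elliptic and the natural tool is an elliptic estimate with Dirichlet conditions, not a multiplier of the form $i(\alpha\,\partial_r+\partial_r\,\alpha)+\beta$; for $\lambda_a,\Phi\ll\tau$ it is hyperbolic and a Gronwall argument on the energy $E[w](r)$ suffices; and only in the trapped regime $\lambda_a\approx\tau\gg1$ are the logarithmic weights needed at all. In that last case the paper does not redo the commutator computation: it observes that, after the substitution $\rs=r-r_a(\tau,\Phi)$ --- which is precisely where the new Kerr feature, the frequency dependence of the trapped radius, is absorbed, using $|\Phi|\leq 4M|\tau|$ to keep $r_a$ near $3M$ --- the equation takes exactly the normal form of Proposition \ref{ODE} (Proposition 3.4 of \cite{MMTT}), which is then quoted verbatim with $b_{ps}^w(\tau,\Phi,\lambda_a)$ in place of $a_{ps}^w(\lambda)$. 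Identifying that reduction is the substantive content of the proof; your sketch instead asserts the conclusion of the hard case ("the logarithmic weight is tuned precisely so that the commutator remains nonnegative") rather than proving it or reducing it to a known result.
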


\begin{proof}
 We start with the following estimate near $r=3M$, which is the equivalent of Proposition 3.3 from \cite{MMTT}:
 
 \begin{proposition}
a) Let $u$ be a function supported in $\{ 5M/2 < r < 5M \}$
which solves $\Box_{\mathbf K} u = f$. Then
\begin{equation}\label{Kerrpsb}
 \| u\|_{LEK_{ps}}^2\lesssim \| f\|^2_{LEK_{ps}^*}.
\end{equation}

b) Let $f \in {LEK_{ps}}^*$ be supported in $\{ 11M/4 < r < 4M \}$. Then there
is a function $u$ supported in $\{ 5M/2 < r < 5M \}$ so that
\begin{equation}
\sup_t E[u] + \| u\|_{LEK_{ps}}^2 
+ \| \Box_{\mathbf K} u -f\|_{LE_{\mathbf K}^{1*}}^2
\lesssim \| f\|^2_{ LEK_{ps}^*}.
\label{Kerrppsb} \end{equation}
\label{Kerrppsbprop}\end{proposition}
\begin{proof} 
 
 We start with part (a). By Plancherel's formula and the fact that 
 \begin{equation}\label{Phi<lambda_a}
 \Phi < \lambda_a
 \end{equation}
 \eqref{Kerrpsb} will follow if we can prove that
\begin{equation}
  \| \partial_r w(r) \|_{L^2} + (|\tau|+|\lambda_a|)\|(b_{ps}^{-1})^w(\tau, \Phi, \lambda_a) w\|_{L^2}  \lesssim \| b_{ps}^w(\tau, \Phi, \lambda_a) g\|_{L^2}.
\label{Kerrpsbhard}\end{equation} 
for any fixed $\tau$, $\Phi$ and $\lambda_a$ (here $b_{ps}^w(\tau, \Phi, \lambda_a)$ is the one dimensional pseudodifferential operator obtained from $b_{ps}$ by fixing $\tau$, $\Phi$ and $\lambda_a$.)

  Depending on the relative sizes of $\lambda_a$, $\tau$ and $\Phi$ and taking into account \eqref{Phi<lambda_a} we consider several cases. In the easier cases it suffices to replace the
bound \eqref{Kerrpsbhard} with a simpler bound
\begin{equation}
  \| \partial_{r} w \|_{L^2} + (|\tau|+|\lambda_a|)\|w\|_{L^2} \lesssim \|g\|_{L^2}.
\label{Kerrpsbeasy}\end{equation}

{\bf Case 1:} $\lambda_a,\tau, \Phi \lesssim 1$.
Then $V_{\lambda_a,\tau, \Phi}(r) \approx 1$. We solve \eqref{psKeq} as a Cauchy problem with data on one side; namely, if
\[
 E[w](r) = \Delta(\partial_ r w)^2 + w^2
\] 
then an easy computation shows that

\[
 \frac{d}{dr}E[w](r) \lesssim E[w](r) + g^2
\]
 By Gronwall's inequality and the fact that $E[w](r) = 0$ away from the photonsphere 
  we obtain the pointwise bound
\[
|w|+|\partial_{r} w| \lesssim \|g\|_{L^2}
\]
which easily implies \eqref{Kerrpsbeasy}.

{\bf Case 2:} $\Phi, \lambda_a \ll \tau$.  Then $V_{\lambda_a,\tau, \Phi}(r) \approx
\tau^2$, $\partial_r V_{\lambda_a,\tau, \Phi}(r) \approx \tau^2$ for $r$ in a compact set; therefore \eqref{psKeq} is hyperbolic in nature.  Hence we can solve \eqref{psKeq} as a Cauchy
problem with data on one side; namely, if
\[
 E[w](r) = \Delta (\partial_r w)^2 + V_{\lambda_a,\tau,\Phi}(r)w^2
\] 
then an easy computation shows that

\[
 \frac{d}{dr}E[w](r) \lesssim E[w](r) + g^2
\]
 By Gronwall's inequality and the fact that $E[w](r) = 0$ away from the photonsphere 
 we obtain the pointwise bound

\[
\tau |w|+|\partial_r w| \lesssim \|g\|_{L^2}
\]
which implies \eqref{Kerrpsbeasy}.

{\bf Case 3:} $\lambda_a \gg \tau$. Then $V_{\lambda_a,\tau, \Phi}(r) \approx
-\lambda_a^2$ for $r$ in a compact set; therefore \eqref{psKeq} is
elliptic.  Then we solve \eqref{psKeq} as an elliptic problem with
Dirichlet boundary conditions on a compact interval and obtain
\[
\lambda_a^{\frac32}\|w\|_{L^2}+\lambda_a^{\frac12}\|\partial_r w\|_{L^2} \lesssim \|g\|_{L^2}
\]
which again gives \eqref{Kerrpsbeasy}.

{\bf Case 4:} $\lambda_a \approx \tau \gg 1$.  In this case
\eqref{Kerrpsbeasy} no longer holds. However, we can now use Proposition 3.4 from \cite{MMTT} which states
\begin{proposition}\label{ODE}
Let $W$ be a smooth function satisfying  $W(0)=W'(0)=0$, $W''(0)>0$,
and $|\epsilon| \lesssim 1$. Let $w$ be a solution of the ordinary differential equation
\[
(\partial_\rs^2 + \lambda^2(W(\rs) +\epsilon)) w(\rs) = g
\]
supported near $\rs = 0$. Then we have
\begin{equation}
\label{psahard}
\| \partial_{\rs} u \|_{L^2} + \lambda \| (a_{ps}^{-1})^w(\lambda)  u\|_{L^2}
\lesssim \| a_{ps}^w(\lambda)  g\|_{L^2} ,
\end{equation}
 Here the symbols $a_{ps}(\lambda)$ and $(a_{ps}^{-1})(\lambda)$ are defined as follows
\[
a_{ps}(\lambda)(\rs,\xi) = \gamma(-\ln(\rs^2 +\lambda^{-2} \xi^2), \ln \lambda),
\]
\[
a_{ps}^{-1}(\lambda)(\rs,\xi) = 
\frac{1}{\gamma(-\ln(\rs^2 +\lambda^{-2} \xi^2), \ln \lambda)}.
\] 
\end{proposition} 

 We can now apply Proposition \ref{ODE} with $\rs = r-r_a(\tau,\Phi)$, $W$ as in \eqref{Vdiff} and with $b_{ps}^w(\tau, \Phi, \lambda_a)$ and $(b_{ps}^{-1})^w(\tau, \Phi, \lambda_a)$ replacing $a_{ps}^w(\lambda)$ and $(a_{ps}^{-1})^w(\lambda)$ respectively. Note that there the extra $\Delta$ coefficient in front of $\partial_r^2$ in \eqref{psKeq} plays no role, since $\Delta \approx \partial_r \Delta \approx 1$ near $r=3M$.

 Part (b) follows now from part (a) exactly like in Proposition 3.3 from \cite{MMTT} with $\lambda_a$ replacing the spherical harmonics $\lambda$ and $\rs = r-r_a(\tau,\Phi)$.
\end{proof}

\end{proof}

We now repeat the arguments in Section 4 of \cite{MMTT} to turn the improved local energy estimate, Proposition \ref{Kerrppsbprop} into Strichartz estimates, Theorem \ref{Strichartz.theorem}. The only (minor)
 differences appear in proving Case II of Proposition 4.10, which we will settle below. We thus need to prove
 \begin{proposition}\label{Kerrps}
 For $u$ supported in $\{ 5M/2 < r < 5M\}$ we have
\[
\|\nabla u\|_{L^p_\tv H^{-\rho,q}}^2 \lesssim
E[u](0)+\|u\|_{LEK_{ps}}^2 + \|\Box_K u\|_{LEK^*_{ps}}^2.
\]
\end{proposition}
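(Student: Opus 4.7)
The plan is to adapt the proof of Case II of Proposition 4.10 in \cite{MMTT} to the Kerr setting, with the spherical harmonic parameter there replaced by the eigenvalue $\lambda_a$ of the modified angular operator $L_a$. The strategy is to use the pseudodifferential operators $B_{ps}$ and $B_{ps}^{-1}$ to conjugate the problem into a clean $H^1$-based setting where standard Strichartz estimates apply, and then transfer the result back to $u$.

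Concretely, I would first set $v = B_{ps}^{-1} u$, so that $\|v\|_{H^1} \approx \|u\|_{LEK_{ps}}$ and the initial data of $v$ is controlled by $E[u](0)$. Using the near-inverse property \eqref{almostid}, the equation $\Box_{\mathbf K} u = f$ can be rewritten as an inhomogeneous wave equation for $v$ whose right-hand side decomposes as $B_{ps}^{-1} f$ plus commutator contributions $[B_{ps}^{-1}, \Box_{\mathbf K}] v$ plus negligible errors from \eqref{almostid}. Using the symbol bounds \eqref{bpsbd}--\eqref{bps1bd} together with $b_{ps}, b_{ps}^{-1} \in S^\delta_{1-\delta, 0}$, one shows that this right-hand side lies in $L^2_{\tv, x}$ with norm bounded by $\|f\|_{LEK^*_{ps}} + \|u\|_{LEK_{ps}}$. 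Next, I would apply the standard local-in-time Strichartz estimates for smooth-coefficient wave equations (e.g.\ \cite{MSS}), which apply on the compact spatial region $\{5M/2 < r < 5M\}$ on unit time slabs, combined with the uniform $H^1$ control of $v$ in $\tv$, to obtain $\nabla v \in L^{p_1}_\tv H^{-\rho_1, q_1}$. Finally, since $u = B_{ps} v$ up to a small $L^2$ error and since $B_{ps}$ is bounded on the Strichartz spaces $L^{p_1}_\tv H^{-\rho_1, q_1}$ (by Calder\'on--Zygmund theory for the symbol class $S^\delta_{1-\delta, 0}$, with $\delta$ chosen small enough to be absorbed by the nonsharpness of the Strichartz pair), this transfers to the desired bound on $\nabla u$.

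The main obstacle, compared with the Schwarzschild analogue in \cite{MMTT}, is that the symbol $b_{ps}$ now depends on the Fourier variables $\tau$ and $\Phi$ in addition to $r$, $\xi$ and $\lambda_a$. This forces a genuine four-variable pseudodifferential calculus for $B_{ps}$, with $\lambda_a$ treated as a fourth Fourier variable (as in Remark \ref{roleoflambda}), and requires careful propagation of the logarithmic weights $(1+|\ln((r-r_a)^2 + \lambda_a^{-2}\xi^2)|)$ through all of the commutator and mapping estimates. Once these kernel bounds and commutator estimates are verified in the four-dimensional setting, the remaining arguments of Section 4 of \cite{MMTT} carry over with only cosmetic modifications, since the improved local energy bound of Theorem \ref{theorem.3} already encodes the effect of the perturbed photon sphere trapping. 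The use of the eigenfunction decomposition of $L_a$ is what allows one to reduce the analysis of $B_{ps}$ to the one-dimensional operators $b_{ps}^w(\tau, \Phi, \lambda_a)$ on each eigenspace, exactly as in the Schwarzschild treatment.
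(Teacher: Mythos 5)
Your overall architecture coincides with the paper's: conjugate by $B_{ps}^{-1}$ to get $\psi = B_{ps}^{-1}u$ with $\|\psi\|_{H^1}\lesssim \|u\|_{LEK_{ps}}$, control $L_K\psi$ in $L^2$ through the commutator $[B_{ps}^{-1},L_K]B_{ps}:H^1\to L^2$ (computed via the Poisson bracket), apply local Strichartz estimates on unit time slabs and sum, then invert via $u = B_{ps}\psi + (I-B_{ps}B_{ps}^{-1})u$ with the second term handled by \eqref{almostid} and Sobolev embedding. Up to that point your proposal matches the paper.

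The genuine gap is in your final step, where you assert that $B_{ps}$ is bounded on a fixed Strichartz space $L^{p_1}_{\tv}H^{-\rho_1,q_1}$ ``by Calder\'on--Zygmund theory for the symbol class $S^{\delta}_{1-\delta,0}$.'' This mechanism fails: operators in the exotic classes $S^{\delta}_{1-\delta,0}$ (or $S^0_{1-\delta,\delta}$) are not Calder\'on--Zygmund operators and are not bounded on $L^q$ for $q\neq 2$ without derivative loss; moreover $b_{ps}$ genuinely grows like $(\ln\lambda_a)^{1/2}$ by \eqref{bpsbds}, so even $L^2$-based boundedness without loss is unavailable, and ``choosing $\delta$ small'' does not help since \eqref{apssymb} holds for every $\delta>0$ without improving the operator norm. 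The paper instead proves the two-exponent mapping property \eqref{Kps.Sobolev}: $B_{ps}$ maps $L^{p_1}H^{-\rho_1,q_1}$ into a \emph{strictly larger-exponent} Strichartz space $L^{p_2}H^{-\rho_2,q_2}$ with $p_1<p_2$, $q_1<q_2$, $\frac{3}{q_2}+\rho_2=\frac{3}{q_1}+\rho_1$. This is established by a dyadic decomposition in $(\lambda_a,\xi)$ with $\lambda_a\approx\tau\approx\mu$, $|\xi|\approx\nu\leq\mu$, explicit kernel bounds \eqref{KerrKbd}, and Hardy--Littlewood--Sobolev; the logarithmic size $\ln(\nu^{-1}\mu)$ of the symbol pieces is absorbed precisely by the Sobolev gain $\mu^{3/q_1-3/q_2}$ coming from the gap between the two pairs. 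That is where the nonsharpness hypothesis is actually spent, and your proposal supplies neither this lemma nor a substitute for it. (Two smaller omissions: the sharp-pair Strichartz estimate is what you get for $\psi$, and only the passage back to $u$ produces the nonsharp restriction; and the potential $V$ in $L_K$ must be absorbed, which the paper does with a short $LEK_{ps}^*\!$--$LEK_{ps}$ bound on each $\lambda_a$-eigenspace.)
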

\begin{proof}
Clearly the operator 
$\Box_K$ can be replaced by $L_{K}$. The potential $V$ can
be neglected due to the straightforward bound
\[
\|V u\|_{LE^*_{ps}} \lesssim \|u\|_{LE_{ps}}.
\]
Indeed, for $u_{\lambda_a}$ the eigenfunction corresponding to the eigenvalue $\lambda_a^2$ we have
\[
\|V u_{\lambda_a}\|_{LEK^*_{ps}} \lesssim |\ln (2+\lambda_a)|^\frac12 \|u_{\lambda_a}\|_{L^2}
\lesssim \lambda_a |\ln (2+\lambda_a)|^{-\frac12} \|u_{\lambda_a}\|_{L^2} \lesssim
\|u_{\lambda_a}\|_{LE_{ps}}
\]

 We introduce the auxiliary function
\[
\psi = B_{ps}^{-1} u.
\]
By the definition of the $LEK_{ps}$ norm we have
\begin{equation}
\| \psi \|_{H^1} \lesssim \|u\|_{LEK_{ps}}.
\label{h1Kpsi}\end{equation}
We also claim that 
\begin{equation}
\|L_K \psi\|_{L^2}  \lesssim \| u\|_{LEK_{ps}} + \| L_K u\|_{L^2}.
\label{Kppsi}\end{equation}
Since $B_{ps}^{-1}$ is $L^2$ bounded, this is a consequence of the 
commutator bound
\[
[  B_{ps}^{-1}, L_K ] : LEK_{ps} \to L^2,
\]
or equivalently
\begin{equation}
[  B_{ps}^{-1}, L_K] B_{ps}: H^1 \to L^2.
\label{Kcmt}\end{equation}
It suffices to consider the first term in the symbol calculus, as the
remainder belongs to $OPS^\delta_{1-\delta,\delta}$, mapping $H^\delta$ to
$L^2$ for all $\delta > 0$. The symbol of the first term is 
\[
q(\tau, \Phi, \xi,r,\lambda_a) = \{ b_{ps}^{-1}(\lambda_a), \Delta \xi^2 -\frac{(r^2+a^2)^2}{\Delta}\tau^2 -a\frac{2Mr}{\Delta}\tau\Phi -\frac{a^2}{\Delta}\Phi^2 + \lambda_a^2 \}
b_{ps}(\lambda_a)
\]
and a-priori we have $q \in S^{1+\delta}_{1-\delta,\delta}$.
For a better estimate we compute the Poisson bracket
\[
\begin{split}
q(\tau, \Phi, \xi,r,\lambda_a)  = &\ b_{ps}^{-1}(\lambda_a) \gamma_y(y,\ln \lambda_a) 
\psi(\frac{\Phi}{\tau}) \\ &\ \frac{ 4 \xi (r-r_a(\tau, \Phi)) - 2 \xi \lambda_a^{-2}(\tau^2 W(r-r_a(\tau, \Phi)) + 2(r-M)\xi^2}{(r-r_a(\tau, \Phi))^2 + \lambda_a^{-2}\xi^2}
\end{split}
\]
where $y = (r-r_a(\tau, \Phi))^2 + \lambda_a^{-2}\xi^2$ and $W\approx r^2$. The first three factors on the
right are bounded. The fourth is bounded by $\max \{\lambda_a, \tau \}$ by Cauchy Schwarz and the fact that $q$ is supported in $|\xi| \lesssim \lambda_a$.  Hence we obtain $q \in
(\tau + \lambda_a) S^{0}_{1-\delta,\delta}$, and the commutator bound \eqref{Kcmt} follows.

Given \eqref{h1Kpsi} and \eqref{Kppsi}, we localize as before $\psi$ to time intervals of unit length and then apply the local Strichartz estimates. By summing over these strips we
obtain
\[
\| \nabla \psi \|_{L^p H^{-\rho,q}} \lesssim \| u\|_{LEK_{ps}} + \|
L_K u\|_{L^2}
\]
for all sharp Strichartz pairs $(\rho,p,q)$.

To  return to $u$ we invert $B_{ps}^{-1}$,
\[
u = B_{ps} \psi + (I - B_{ps} B_{ps}^{-1}) u.
\]
The second term is much more regular, since by \eqref{almostid} we have
\[
 I - B_{ps} B_{ps}^{-1} \in S^{-1 + \delta}_{1- \delta, 0}, \qquad \delta > 0.
\]
This leads to 
\[
\| \nabla (I - B_{ps} B_{ps}^{-1}) u\|_{L^2 H^{1-\delta}} \lesssim
\| u \|_{LEK_{ps}}, \qquad \delta > 0;
\]
therefore all the Strichartz estimates are satisfied simply by Sobolev 
 embeddings. 

For the main term $B_{ps}\psi$ we take advantage of the fact that
we only seek to prove the nonsharp Strichartz estimates for $u$.
The  nonsharp Strichartz estimates for $\psi$ are obtained from the
sharp ones via Sobolev embeddings,
\[
\|\nabla \psi\|_{H^{-\rho_2,q_2}} \lesssim \|\nabla
\psi\|_{H^{-\rho_1,q_1}}, \qquad \frac{3}q_2 + \rho_2 = \frac{3}q_1 +
\rho_1, \quad \rho_1 < \rho_2.
\]
To obtain the nonsharp estimates for $u$ instead, we need a
slightly stronger form of the above bound, namely

\begin{lemma}
Assume that $(\rho_1, p_1, q_1)$, $(\rho_2, p_2, q_2)$ are Strichartz pairs with $p_1 < p_2$, $q_1 < q_2 < \infty$. Then
\begin{equation}\label{Kps.Sobolev}
\|B_{ps} w \|_{L^{p_2}H^{-\rho_2,q_2}} \lesssim \| w\|_{L^{p_1}H^{-\rho_1,q_1}}, \qquad 
\frac{3}q_2 + \rho_2 = \frac{3}q_1 + \rho_1.
\end{equation}
\end{lemma}

\begin{proof}
We need to prove that the operator
\[
\tilde B = Op^w(\xi^2+\lambda_0^2+1)^{-\frac{\rho_2}2} B_{ps} 
Op^w(\xi^2+\lambda_0^2+1)^{\frac{\rho_1}2}
\] 
maps $L^{q_1}$ into $L^{q_2}$. The principal symbol of 
$\tilde B$ is 
\[
\tilde b(r,\tau, \xi,\Phi, \lambda_a) = (\xi^2+\lambda_0^2+1)^{\frac{\rho_1-\rho_2}2}
b_{ps}(\rs,\xi,\lambda_a),
\]
and by the pdo calculus the remainder is easy to estimate,
\[
\tilde B - \tilde b^w \in OPS^{ \rho_1-\rho_2 -1 +\delta}_{1-\delta, 0}, \qquad \delta > 0.
\]
The conclusion of the lemma will follow from the
Hardy-Littlewood-Sobolev inequality (applied separately for $t$, $r$ and $\omega$)
if we prove a suitable pointwise bound on the kernel $K$ of $b_0^w$, namely
\begin{equation}
|K(t_1, r_1,\omega_1, t_2, r_2,\omega_2)| \lesssim |t_1-t_2|^{-1 + \frac{1}{p_1} -\frac{1}{p_2}}(|r_1-r_2| 
|\omega_1-\omega_2|^2)^{-1 + \frac{1}{q_1} -\frac{1}{q_2}}.
\label{KerrKbd}\end{equation}
For fixed $r$ we consider a smooth dyadic partition of unity 
in frequency as follows:
\[
1  = (1-\chi_{\{\tau \approx \lambda_a \}})\chi_{\{|\xi| > \lambda_a\}}\chi_{\{\mu < 1\}}  
+ \chi_{\{\tau \approx \lambda_a \}}\sum_{\mu\geq 1 \, \text{dyadic}} \chi_{\{\lambda_a \approx \mu\}}
 \sum_{\nu = 0}^\mu \chi_{\{|\xi| \approx \nu\}}
\]

This leads to a similar decomposition for $\tilde b$, namely
\[
\tilde b = {\tilde b}_{00} + \sum_{\mu\geq 1} \sum_{\nu = 0}^\mu
\tilde b_{\mu \nu}.
\]
In the region $\{|\xi| \gtrsim \lambda\}\cup \{\mu < 1\}\cup \{\tau \gg \lambda \}\cup \{\tau \ll \lambda \}$ the symbol $\tilde b$ is of class $S^{\rho_1-\rho_2}$, which yields a kernel bound  for $\tilde b_{00}$
of the form 
\[
|K_{00}(t_1, r_1,\omega_1,t_2, r_2,\omega_2)| \lesssim (|t_1-t_2| + |r_1-r_2| +
|\omega_1-\omega_2|)^{-3 + \rho_2 - \rho_1}.
\]
The symbols of $\tilde b_{\mu \nu}$ are supported in $\{ |\xi| \approx \nu, \
\lambda \approx \tau \approx \mu \}$, are smooth on the same scale and have
size at most $\ln (\nu^{-1} \mu) \mu^{\rho_1-\rho_2}$. Clearly on their support we also have $\lambda_0 \approx \lambda_a \approx \mu$.
Hence after integration by parts we get that their kernels satisfy bounds 
of the form
\begin{align*}
& |K_{\mu,\nu} (t_1, r_1,\omega_1, t_2, r_2,\omega_2)|\\
\lesssim &\ln (\nu^{-1}\mu)  \mu^{\rho_1-\rho_2} \nu (|r_1-r_2|\nu + 1)^{-N} \mu^2 (|\omega_1-\omega_2|\mu + 1)^{-N}(|t_1-t_2|\mu + 1)^{-N} \\
\lesssim & |t_1-t_2|^{-1 + \frac{1}{p_1} -\frac{1}{p_2}} \ln (\nu^{-1}
\mu)  \mu^{\frac{3}{q_1}-\frac{3}{q_2}} \nu (|r_1-r_2|\nu + 1)^{-N} \mu^2
(|\omega_1-\omega_2|\mu + 1)^{-N}
\end{align*}
for all nonnegative integers $N$. Then \eqref{KerrKbd} follows after
summation.
\end{proof}

 \end{proof}

\end{document}